\newcommand\@erelb@r[1]{%
	\mathrel{\tikz[baseline=-.5ex]\draw[#1] (0,0)--(0.3,0);}
}
\newcommand{\erelbar}[1]{\@erelbar#1}
\def\@erelbar#1#2{%
	\ifcase\numexpr#1*4+#2\relax
	\@erelb@r{|-}\or 
	\@erelb@r{-|}   
	\else
	\@wrong
	\fi
}
\declaretheorem[name=Theorem]{theorem}
\declaretheorem[name=Lemma, sibling=theorem]{lemma}
\declaretheorem[name=Conjecture, sibling=theorem]{conjecture}
\declaretheorem[name=Problem, sibling=theorem]{problem}
\def\cqedsymbol{\ifmmode$\lrcorner$\else{\unskip\nobreak\hfil
		\penalty50\hskip1em\null\nobreak\hfil$\lrcorner$
		\parfillskip=0pt\finalhyphendemerits=0\endgraf}\fi}
\let\le\leqslant
\let\ge\geqslant
\title{Polynomial Gyárfás-Sumner conjecture for graphs of bounded boxicity}
\author[1]{James Davies}
\author[2]{Yelena Yuditsky}
\affil[1]{University of Cambridge, United Kingdom.}
\affil[2]{Université libre de Bruxelles, Belgium.}
\date{}
\begin{document}
	\sloppy
	
	\maketitle
	
	\begin{abstract}
		We prove that for every positive integer $d$ and forest $F$, the class of intersection graphs of axis-aligned boxes in $\mathbb{R}^d$ with no induced $F$ subgraph is (polynomially) $\chi$-bounded.
	\end{abstract}
	
	\section{Introduction}

	For a graph $G$, we denote its clique and chromatic number by $\omega(G)$ and $\chi(G)$ respectively.
	A class of graphs $\mathcal{G}$ is \emph{(polynomially) $\chi$-bounded} if there exists a (polynomial) function $f:\mathbb{N} \to \mathbb{N}$ such that $\chi(G)\le f(\omega(G))$ for every $G\in \mathcal{G}$.
	Note that not all classes of graphs are $\chi$-bounded~\cite{tutte,descartes1954solution} and not all hereditary $\chi$-bounded classes of graphs are polynomially $\chi$-bounded~\cite{brianski2024separating}.
	
	The Gyárfás-Sumner conjecture \cite{gyarfas1975ramsey,sumner1981subtrees} states that for every forest $F$, the class of graphs with no induced $F$ subgraph is $\chi$-bounded.
	The beauty of the Gyárfás-Sumner conjecture is that forests $F$ are the only graphs for which such a class could be $\chi$-bounded, since Erd\H{o}s~\cite{erdos1959graph} show that there are graphs with arbitrarily large girth and chromatic number.
	R\"{o}dl (see~\cite{gyarfas1980induced,kierstead1996applications}) proved a natural weakening of the Gyárfás-Sumner conjecture that graphs without an induced $K_{s,s}$ and an induced forest $F$ are $\chi$-bounded.
	Scott, Seymour and Spirkl~\cite{scott2023polynomialI} further proved that such graphs are polynomially $\chi$-bounded.
	R\"{o}dl's theorem retains the same beauty of the Gyárfás-Sumner conjecture since graphs without an induced $K_{s,s}$ clearly still include graphs with arbitrarily large girth and chromatic number.
	
	One of the most classical results in $\chi$-boundedness from 1960 is that rectangle intersection are (polynomially) $\chi$-bounded~\cite{asplund1960coloring}.
	This is not the case for boxes in $\mathbb{R}^3$ since in 1965, Burling~\cite{burling1965coloring} showed that there are triangle-free intersection graphs of axis-aligned boxes in $\mathbb{R}^3$ with arbitrarily large chromatic number.
	The first author~\cite{davies2021box} further showed that there are box intersection graphs with arbitrarily large girth and chromatic number.
	We prove the Gyárfás-Sumner conjecture for box intersection graphs and more generally for intersection graphs of axis-aligned boxes in $\mathbb{R}^d$.
	We even obtain polynomial $\chi$-boundedness.
	
	\begin{theorem}\label{main}
		For every positive integer $d$ and forest $F$, the class intersection graphs of axis-aligned boxes in $\mathbb{R}^d$ with no induced $F$ subgraph is polynomially $\chi$-bounded.
	\end{theorem}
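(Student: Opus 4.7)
The plan is to argue by induction on the number of vertices of the forest $F$. The base cases are handled readily: if $F$ is a single vertex then every $G$ in the class must be empty, and if $F$ is a single edge then $G$ must itself be edgeless. For the inductive step, pick a leaf $v$ of a nontrivial component of $F$, let $u$ be its unique neighbour, and set $F' = F - v$, so that the inductive hypothesis provides a polynomial $\chi$-bound for the subclass of $d$-box intersection graphs with no induced $F'$.

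The main strategy is a dichotomy based on a fixed large parameter $s = s(d, F)$, combining the polynomial R\"{o}dl-type theorem with geometry. Let $G$ be a $d$-box intersection graph with no induced $F$. If $G$ additionally contains no induced $K_{s,s}$, then the desired polynomial $\chi$-boundedness is immediate from the theorem of Scott, Seymour, and Spirkl~\cite{scott2023polynomialI} on $\{K_{s,s}, F\}$-free graphs. Otherwise $G$ contains a large induced $K_{s,s}$; since interval graphs are chordal and hence contain no induced $C_4 = K_{2,2}$, Ramsey-type pigeonholing on pairs of boxes from each side canonicalises the configuration so that one side of a large sub-$K_{s',s'}$ is pairwise separated in a fixed coordinate $i$ and the other side in a different fixed coordinate $j$ (the ``horizontal versus vertical'' pattern already present when $d=2$). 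Assuming $\chi(G)$ exceeds a generous polynomial in $\omega(G)$, a subgraph of $G$ with still-large chromatic number should survive after discarding the $K_{s,s}$ together with a controlled bad neighbourhood around it; applying the inductive hypothesis for $F'$ to this surviving subgraph produces an induced copy of $F'$, which one then seeks to extend to an induced $F$ using a single box from the $K_{s,s}$ playing the role of the leaf $v$.

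The main technical obstacle is precisely this extension step: within a $K_{s,s}$ itself there is no induced $P_4$, so the added leaf cannot be supplied from the $K_{s,s}$ alone --- it must be coordinated with the geometric placement of the already-constructed $F'$. I expect the proof to need, on top of the induction on $|V(F)|$, either a secondary induction on the dimension $d$ (with the interval-graph base case $d=1$, where perfection yields $\chi = \omega$) or a projection argument that uses the canonicalised coordinates $i$ and $j$ to extract a $(d-1)$-box subgraph to which a weaker form of the statement applies. Keeping all polynomial dependencies on $\omega$ under control throughout these nested inductions, and ensuring that the auxiliary parameter $s$ depends only on $d$ and $F$, is likely to require the bulk of the technical work.
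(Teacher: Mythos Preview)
Your proposal is a plan rather than a proof, and the step you flag as ``the main technical obstacle'' is in fact a genuine gap. The dichotomy on $K_{s,s}$ is too coarse: in the branch where $G$ contains an induced $K_{s,s}$, a single biclique gives you essentially no control over the rest of the graph. For the extension to work you would need the vertex playing the role of $u$ in your induced $F'$ to be adjacent to some box of the $K_{s,s}$ while every other vertex of $F'$ is non-adjacent to that same box. But the induced $F'$ produced by the inductive hypothesis lives in an arbitrary high-chromatic piece of $G$ and need have no incidence whatsoever with the fixed $K_{s,s}$; conversely, if you restrict attention to the neighbourhood of the $K_{s,s}$ in order to force such incidences, there is no reason that neighbourhood should retain large chromatic number. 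Iterating (delete the $K_{s,s}$ and a ``bad'' set, find another, repeat) does not converge to anything useful without a quantitative lemma you have not supplied. The vague suggestions of a secondary induction on $d$ or a projection to $d-1$ coordinates do not address this: projecting destroys the induced-subgraph relation, and nothing in the sketch explains how a $(d-1)$-dimensional statement would furnish the missing leaf in dimension $d$.

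The paper's proof is entirely different and avoids induction on $|V(F)|$ altogether. It partitions the edges of $G$ into $4^d$ classes according to the \emph{intersection pattern} (for each coordinate: contains, is contained, left-overlaps, right-overlaps), orients each class, and shows that each resulting digraph $\overrightarrow{G_R}$ is acyclic and \emph{modest} (any directed path whose endpoints are adjacent spans a clique). A self-contained digraph lemma then says that any acyclic modest digraph with $\chi > 2dn\omega$ contains a path-induced copy of any depth-$d$, $n$-vertex rooted out-tree. Finally, a geometric fact specific to boxes---if two out-neighbours of a vertex are non-adjacent then all their descendants remain non-adjacent---together with the Erd\H{o}s--Hajnal bound $|V|\le \alpha^d\omega$ lets one prune a path-induced $T_{r,k^d\omega}$ down to a genuinely induced $T_{r,k}$. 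No appeal to the Scott--Seymour--Spirkl $K_{s,s}$ theorem is needed, and the argument is global rather than built around a single biclique.
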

	
	Two other other classes of geometric graphs that are polynomially $\chi$-bounded are circle graphs~\cite{davies2022improved,davies2021circle} and (more generally) grounded $L$-graphs~\cite{davies2023grounded}.
	Two more geometric $\chi$-bounded classes of graphs are outerstring graphs~\cite{rok2019outerstring} (which generalize circle graphs and grounded $L$-graphs) and polygon visibility graphs~\cite{davies2023coloring}.
	
	The Gyárfás-Sumner conjecture is widely open and is only known for forests $F$ with relatively simple components~\cite{chudnovsky2019induced,gyarfas1987problems,kierstead2004radius,kierstead1994radius,scott2020induced,scott1997induced} such as paths~\cite{gyarfas1987problems}, subdivided stars~\cite{scott1997induced} and trees of radius at most two~\cite{kierstead1994radius}.
	Polynomial $\chi$-boundedness is only known for quite simple forests~\cite{chudnovsky2023polynomial,liu2023polynomial,scott2022polynomialII,scott2022polynomial}, the most general result being for forests $F$ where at most one component is a double star and the rest are stars or $P_4$~\cite{chudnovsky2023polynomial,scott2022polynomialII,scott2022polynomial}. A quasi-polynomial bound is known for graphs without an induced $P_5$~\cite{scott2023polynomial}.
	
	One other class of graphs that is not $\chi$-bounded~\cite{pawlik2014triangle} where the Gyárfás-Sumner conjecture holds is string graphs~\cite{chudnovsky2021induced}. Although unlike box intersection graphs and graphs without an induced $K_{s,s}$, string graphs with girth at least 5 have bounded chromatic number~\cite{kostochka1998coloring}.
	Like with considering the Gyárfás-Sumner conjecture for specific forests $F$, studying the Gyárfás-Sumner conjecture for more specific classes of graphs could develop techniques that may eventually contribute to resolving the Gyárfás-Sumner conjecture.
	For more on $\chi$-boundedness, see the survey of Scott and Seymour~\cite{scott2018survey}.
	
	In \Cref{sec:open} we shall discuss further problems.
	In the next two sections, we prove \Cref{main}. \Cref{sec:direct} is dedicated to a preliminary lemma on directed graphs, and then \Cref{sec:boxes} focuses on box intersection graphs.

	\section{Directed graphs}\label{sec:direct}
	
	This section is dedicated to proving a purely graph theoretic lemma on finding ``path-induced'' directed trees in certain directed graphs with large chromatic number (see \Cref{directedmain}). We remark that Nguyen, Scott, and Seymour~\cite{nguyen2024note} proved a path-induced weakening of the Gyárfás-Sumner conjecture, however this is insufficient for our purposes since we require rooted directed trees.

	For a directed graph $\overrightarrow{D}$, we let $D$ be its underlying undirected graphs.
	For a vertex $u\in V(D)$, we let $N^+(u)$ denote the set of vertices $v$ such that $\overrightarrow{D}$ has a directed edge from $u$ to $v$. These are the \emph{out-neighbours} of $u$. When the directed graph $\overrightarrow{D}$ is not clear from context, we use $N^+_{\overrightarrow{D}}(u)$.
	We say that a set of vertices in $\overrightarrow{D}$ is a clique if it is a clique in $D$.
	A directed tree $\overrightarrow{T}$ is \emph{rooted} if there exists a (root) vertex $r$ such that for every leaf $\ell$ of $T$, the path between $r$ and $\ell$ is a directed path from $r$ to $\ell$.

	For a pair of vertices $u,v$ of a directed graph $\overrightarrow{D}$, we let $t(u,v)$ denote the maximum size of a \emph{transitive tournament} starting at $u$ and ending at $v$, where a \emph{transitive tournament} is a complete acyclic directed graph. 
	
	For positive integers $k,m$, a \emph{$(k,m)$-grading} of a directed graph $\overrightarrow{D}$ is collection $(X_1, \ldots , X_m)$ where $X_1\subseteq \cdots \subseteq X_m = V(D)$ with $X_1$ non-empty such that for every $1 \le i < m$ and $v \in X_i$, $|N^+(v)\cap X_{i+1}|\ge k$.
	For a directed graph $\overrightarrow{D}$ with a given grading $(X_1, \ldots , X_m)$, for each $v\in V(D)$, we let $g(v)$ be equal to the minimum $1\le i \le m$ such that $v\in X_i \backslash X_{i-1}$ (where $X_{0}=\emptyset$).
	For a vertex $v$ of a rooted directed tree $\overrightarrow{T}$, we let $\overrightarrow{T_v}$ be the rooted directed tree subgraph consisting of $v$ and its \emph{decedents} (the vertices $w$ for which there is is a directed path from $v$ to $w$ in $\overrightarrow{T}$).
	A rooted directed tree subgraph $\overrightarrow{T}$ of a directed graph $\overrightarrow{D}$ with grading $(X_1, \ldots , X_m)$ is \emph{calm} if its root is contained in $X_1$ and for every $uv\in E(\overrightarrow{T})$, we have that $t(u,v)-1\ge g(v)-g(u)$, and for every $uw\in E(\overrightarrow{D})$ with $w\in V(D)\backslash (V(T) \backslash V(T_v)) $ with $t(u,w)-1\ge g(w)-g(u)$, we have that $t(u,v)\ge t(u,w)$.
	
	We prove that within large gradings, we can find calm copies of trees.
	The \emph{depth} of a rooted directed tree $\overrightarrow{T}$ is equal to longest length of a directed path from the root to a leaf.
	
	\begin{lemma}\label{grading}
		Let $\overrightarrow{T}$ be a rooted directed tree with depth $d$ and $n$ vertices and let $D$ be a directed graph with a $(k,m)$-grading $(X_1 , \ldots , X_m)$ where $m \ge 1 + (d-1)(\omega(D)-1)$ and $k \ge n-1$.
		Then there is a calm copy $\overrightarrow{T'}$ of $\overrightarrow{T}$ in the grading $(X_1 , \ldots , X_m)$.
	\end{lemma}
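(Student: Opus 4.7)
My plan is to construct the embedding explicitly. I would place the root of $\overrightarrow{T}$ at an arbitrary vertex $r^{*} \in X_1$ (which also handles the trivial single-vertex case) and then traverse $\overrightarrow{T}$ in DFS order: whenever we reach an already-placed vertex $u$ with an unprocessed child $v$ in $\overrightarrow{T}$, pick as the image of $v$ an unplaced good out-neighbor $w$ of $u$ in $\overrightarrow{D}$ (meaning $t(u,w) - 1 \ge g(w) - g(u)$) that maximizes $t(u,w)$, and then recurse on $T_v$.

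The analysis then splits into three points. First, at each step a valid candidate exists: the grading provides at least $k \ge n-1$ good out-neighbors of $u$ in $X_{g(u)+1}$ (any out-neighbor in $X_{g(u)+1}$ is automatically good, since $g(w) - g(u) \le 1 \le t(u,w) - 1$), and at most $n-2$ of these can already be placed (the total number of placed vertices is at most $n-1$ and $u$ itself is placed but is not an out-neighbor of itself). Second, the $g$-values stay within the grading: a routine induction on the tree-depth $i$ of $u$ gives $g(u) \le 1 + i\,(\omega(D) - 1)$, so that under the bound $m \ge 1 + (d-1)(\omega(D)-1)$ the leaves sit at level at most $m$ and the internal vertices of $\overrightarrow{T}$ sit at level at most $m - (\omega(D)-1) \le m-1$, still enjoying the grading-guaranteed out-neighbors needed for the next pick. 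Third, the embedding is calm: the ``good'' condition $t(u,v)-1 \ge g(v)-g(u)$ is built into the choice of pick; and for the ``max'' condition at an edge $uv$, any good $w \in V(D) \setminus (V(T) \setminus V(T_v))$ is either outside $V(T)$ entirely (and so was always unplaced and was a candidate at $v$'s pick-time) or a proper descendant of $v$ inside $V(T_v)$ (placed strictly later during the DFS recursion into $T_v$, hence also a candidate at $v$'s pick-time); in either case the maximal choice of $v$ delivers $t(u,w) \le t(u,v)$.

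The main obstacle is the second point. The bound $m \ge 1 + (d-1)(\omega(D)-1)$ is essentially tight, matching the worst case where every edge of $\overrightarrow{T}$ realizes a transitive tournament of full size $\omega(D)$. The crucial observation is that internal vertices of $\overrightarrow{T}$ are always strictly shallower than its deepest leaves, which, combined with the inductive bound on $g$, keeps them strictly below the top of the grading so that the grading continues to deliver out-neighbors and the recursion can continue.
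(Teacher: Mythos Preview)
Your proposal is correct and takes essentially the same approach as the paper: the paper's induction on $n$ (remove a leaf $p_\ell$, calmly embed $T-p_\ell$, then reattach $p_\ell$ as a good out-neighbour of $p'_{\ell-1}$ maximizing $t(p'_{\ell-1},\cdot)$ among unused vertices) is exactly your DFS construction unrolled, and your three-point analysis---candidate existence via the grading, the inductive bound on $g$, and the calm ``max'' condition via the timing of the greedy choice---mirrors the paper's reasoning line for line. (Your Point~2 and the paper's sentence ``Therefore $g(p_{\ell-1}')\le (d-1)(\omega(D)-1)$'' share the same harmless off-by-one when bounding $g$ of an internal vertex against~$m$.)
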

	
	\begin{proof}
		Clearly the lemma is true if $n=1$, so we shall proceed inductively on $n$.
		Let $p_0\cdots p_\ell$ be a directed path of $\overrightarrow{T}$ where $p_0$ is the root and $p_\ell$ is a leaf.
		Note that $\ell\le d$.
		By the inductive hypothesis, there is a calm copy $\overrightarrow{T^*}$ of $\overrightarrow{T} - p_\ell$ in $(X_1 , \ldots , X_m)$.
		For each $v\in V(T) - p_\ell$, let $v'$ be the corresponding vertex of $\overrightarrow{T^*}$ in $\overrightarrow{D}$.
		
		For every $u'v'\in E(\overrightarrow{T^*})$, we have that $g(v')-g(u')\le t(u',v') - 1 \le \omega(D) - 1$.
		Therefore $g(p_{\ell -1}')\le (d-1)(\omega(D) - 1)$.
		Since $(X_1,\ldots , X_m)$ is a $(k,m)$-grading of $\overrightarrow{D}$ and $k\ge n-1$, there is some vertex $w\in X_{g(p_{\ell -1}')+1} \backslash V(T^*)$ with $p'_{\ell -1}w \in E(\overrightarrow{D})$.
		In particular, $t(p'_{\ell -1},w)  - 1\ge 1 \ge g(w) - g(p'_{\ell -1})$.
		Now, let $p'_\ell \in V(D)\backslash V(T^*)$ be a vertex with $p'_{\ell -1}p'_{\ell }\in E(\overrightarrow{D})$, $t(p'_{\ell -1},p'_\ell)  - 1\ge g(p'_\ell ) - g(p'_{\ell -1})$, and subject to this, $t(p'_{\ell -1},p'_\ell)$ is maximised.
		Note that $w$ shows that such a vertex $p'_\ell$ exists.
		Then we can obtain our desired tree $T'$ by adding $p_\ell'$ and $p'_{\ell -1}p'_\ell$ to $T^*$.
	\end{proof}
	
	The next lemma essentially allows us to find large gradings. The exact statement better facilities its inductive proof.
	For a set of vertices $C$ in a graph $G$, we let $G[C]$ denote the induced subgraph of $G$ on vertex set $C$.
	
	\begin{lemma}\label{highDegree}
		Let $c,k,m$ be positive integers.
		Then, for every directed graph $\overrightarrow{D}$ with $\chi(D)\ge c+2(m-1)k$, there exists a partition $\bigcup_{i=1}^m C_i$ of $V(D)$ such that $\chi(D[C_m]) \ge c$ and for every $2\le  j \le m$ and $v\in \bigcup_{i=j}^m C_i$, we have that $\left| N^{+}(v) \cap \left( \bigcup_{i=j-1}^{m} C_i \right) \right| \ge k$.
	\end{lemma}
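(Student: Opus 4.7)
The plan is induction on $m$. The base case $m=1$ is immediate: set $C_1 = V(D)$, so $\chi(D[C_1]) = \chi(D) \geq c$ by hypothesis, and the out-neighbour condition is vacuous. For the inductive step, given $\overrightarrow{D}$ with $\chi(D) \geq c + 2(m-1)k$, I would let
\[
C_1 := \{v \in V(D) : |N^+(v)| < k\},
\]
which forces every $v \in V(D) \setminus C_1$ to have at least $k$ out-neighbours in $V(D)$, taking care of the condition at $j=2$ automatically.

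The key calculation is to show $\chi(D - C_1) \geq c + 2(m-2)k$, so that the inductive hypothesis may be applied. For any $S \subseteq C_1$, the induced sub-digraph $\overrightarrow{D}[S]$ still has every out-degree below $k$ (restricting to $S$ only decreases out-degrees), so it has fewer than $k|S|$ directed edges; consequently $D[S]$ has fewer than $k|S|$ undirected edges, and average undirected degree below $2k$. Hence $D[S]$ contains a vertex of degree at most $2k-1$, so $D[C_1]$ is $(2k-1)$-degenerate and $\chi(D[C_1]) \leq 2k$. Combining with the trivial partition bound $\chi(D) \leq \chi(D[C_1]) + \chi(D - C_1)$ yields $\chi(D - C_1) \geq c + 2(m-1)k - 2k = c + 2(m-2)k$, exactly what is needed.

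Applying the inductive hypothesis to $\overrightarrow{D} - C_1$ with parameters $c,k,m-1$ produces a partition $V(D) \setminus C_1 = C_2 \cup \cdots \cup C_m$ such that $\chi((D-C_1)[C_m]) \geq c$ (which equals $\chi(D[C_m])$ since $C_m \subseteq V(D)\setminus C_1$) and, for every $3 \leq j \leq m$ and $v \in C_j \cup \cdots \cup C_m$, $|N^+_{\overrightarrow{D}-C_1}(v) \cap (C_{j-1} \cup \cdots \cup C_m)| \geq k$; as $C_{j-1} \cup \cdots \cup C_m \subseteq V(D) \setminus C_1$, restricting to $\overrightarrow{D} - C_1$ does not change this intersection, so the bound holds in $\overrightarrow{D}$ as well. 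The only subtlety -- and the reason for the factor of $2$ in the hypothesis -- is that an out-degree bound of $k$ on $\overrightarrow{D}[C_1]$ only controls the underlying undirected average degree up to a factor of $2$, so each inductive step costs $2k$ (rather than $k$) in chromatic number.
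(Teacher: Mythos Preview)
Your proof is correct and follows essentially the same approach as the paper: induction on $m$, peeling off one layer of low out-degree vertices per step and using the $(2k-1)$-degeneracy of that layer to bound its chromatic number by $2k$. The only cosmetic difference is that you peel from the bottom (defining $C_1$ first and applying induction to $\overrightarrow{D}-C_1$), whereas the paper peels from the top (applying induction first to get $C_1',\ldots,C_{m-1}'$ and then splitting $C_{m-1}'$ into $C_{m-1}$ and $C_m$); the two are mirror images of the same argument.
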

	
	\begin{proof}
		The lemma is trivially true if $m=1$, so we shall proceed inductively assuming the lemma holds for $m-1$.
		By the inductive hypothesis, there exists a partition $\bigcup_{i=1}^{m-1} C_i'$ of $V(D)$ such that $\chi(D[C_{m-1}']) \ge c + 2k$ and for every $2\le  j \le m-1$ and $v\in \bigcup_{i=j}^{m-1} C_i'$, we have that $\left| N^{+}(v) \cap \left( \bigcup_{i=j-1}^{m-1} C_i' \right) \right| \ge k$.
		For each $1\le i \le m-2$, let $C_i = C_i'$.
		Now, let $C_{m-1}$ be the vertices of $C_{m-1}'$ with out-degree at most $k-1$ in $\overrightarrow{D[C_{m-1}']}$, and let $C_m = C_{m-1}' \backslash C_{m-1}$.
		Then by definition, for all $v\in C_m$, we have that $\left| N^{+}(v) \cap \left( \bigcup_{i=m-1}^{m} C_i \right) \right| \ge k$, and clearly for every $2\le  j \le m-1$ and $v\in \bigcup_{i=j}^k C_i$, we also still have that $\left| N^{+}(v) \cap \left( \bigcup_{i=j-1}^{m} C_i \right) \right| \ge k$.
		Since $\overrightarrow{D[C_{m-1}]}$ is $(2(k-1))$-degenerate, we have $\chi(D[C_{m-1}]) \le 2k$.
		Therefore, $\chi(D[C_m]) \ge \chi(D[C_{m-1}']) - \chi(D[C_{m-1}]) \ge c$, as desired.
	\end{proof}
	
	A directed graph $\overrightarrow{D}$ is \emph{acyclic} if it contains no directed cycles.
	We say that $\overrightarrow{D}$ is \emph{modest} if for every directed edge $uv$ and for every directed path $\overrightarrow{P}$ from $u$ to $v$, $V(P)$ is a clique.
	For a directed graph $\overrightarrow{G}$, we say that a directed subgraph $\overrightarrow{H}$ is \emph{path-induced} if for every directed path $\overrightarrow{P}$ of $\overrightarrow{H}$, $\overrightarrow{P}$ is furthermore induced in $\overrightarrow{G}$.
	Similarly for $\overrightarrow{H}$ when $H$ is a subgraph of an (undirected) graph $G$.
	
	We can now obtain the main result of this section.
	This will be used in the proof of \Cref{main} in the next section.
	
	\begin{lemma}\label{directedmain}
		Let $\overrightarrow{T}$ be a rooted directed tree with depth $d$ and $n$ vertices.
		Then every acyclic modest directed graph $\overrightarrow{D}$ with $\chi(D) > 2dn\omega(D)$ contains a path-induced copy of $\overrightarrow{T}$.
	\end{lemma}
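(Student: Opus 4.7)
The plan is to chain the two preceding lemmas: use \Cref{highDegree} to convert the chromatic-number hypothesis into a long $(k,m)$-grading of $\overrightarrow{D}$, then invoke \Cref{grading} to extract a calm copy $\overrightarrow{T'}$ of $\overrightarrow{T}$ inside this grading, and finally argue that calmness together with modesty forces $\overrightarrow{T'}$ to be path-induced.

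For the setup I would take $k = n-1$, $m = 1 + (d-1)(\omega(D)-1)$, and $c = 1$. A short calculation gives $c + 2(m-1)k \leq 2dn\omega(D) < \chi(D)$, so \Cref{highDegree} yields a partition $V(D) = C_1 \cup \cdots \cup C_m$ with $C_m$ non-empty (since $\chi(D[C_m]) \geq 1$) and the stated out-neighbourhood property. Reversing the indexing by setting $X_i = \bigcup_{j=m-i+1}^{m} C_j$ gives $X_1 = C_m$, $X_m = V(D)$, and the out-neighbourhood property translates to the $(k,m)$-grading condition, so \Cref{grading} produces a calm copy $\overrightarrow{T'}$ of $\overrightarrow{T}$ in this grading.

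It remains to show $\overrightarrow{T'}$ is path-induced. Suppose not, so some directed path $v_0 \to v_1 \to \cdots \to v_\ell$ in $\overrightarrow{T'}$ admits a chord $v_iv_j \in E(\overrightarrow{D})$ with $j > i+1$; choose such a chord with $j-i$ minimum. Acyclicity forces the chord to be $v_i \to v_j$, and applying modesty to this chord and the tree-path from $v_i$ to $v_j$ shows $\{v_i, \ldots, v_j\}$ is a clique, so if $j - i \geq 3$ then $v_i \to v_{j-1}$ is a strictly shorter chord, contradicting minimality. Hence $j = i+2$; write $u = v_i$, $v = v_{i+1}$, $x = v_{i+2}$ with tree edges $u \to v$, $v \to x$ and chord $u \to x$.

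The crux is the inequality $t(u,x) \geq t(u,v) + t(v,x) - 1$. To prove it, pick maximum transitive tournaments $S_1$ from $u$ to $v$ and $S_2$ from $v$ to $x$ (intersecting only in $\{v\}$); for each pair $y \in S_1$, $z \in S_2$ the directed path inside $S_1 \cup S_2$ from $u$ through $y$, $v$, $z$ to $x$, together with the chord $u \to x$ and modesty, forces $\{u,y,v,z,x\}$ to be a clique and hence $yz \in E(\overrightarrow{D})$. So $S_1 \cup S_2$ is a clique, and by acyclicity it is a transitive tournament from $u$ to $x$ of size $t(u,v) + t(v,x) - 1$, giving the inequality. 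Combining with the calm step-size bounds $g(v) - g(u) \leq t(u,v) - 1$ and $g(x) - g(v) \leq t(v,x) - 1$ yields $t(u,x) - 1 \geq g(x) - g(u)$; since $x$ is a descendant of $v$ (and thus $x \in V(T'_v)$), the calmness of $\overrightarrow{T'}$ at the tree edge $u \to v$ with alternative $w = x$ then forces $t(u,v) \geq t(u,x) \geq t(u,v) + 1$, a contradiction. The main obstacle is establishing the key inequality, which requires several careful applications of modesty to sub-paths through intermediate vertices of $S_1 \cup S_2$ to certify that the union is indeed a clique.
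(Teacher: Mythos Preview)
Your proof is correct and follows essentially the same route as the paper's: apply \Cref{highDegree} to obtain a grading, apply \Cref{grading} to obtain a calm copy, and then derive a contradiction from a hypothetical chord by gluing two maximum transitive tournaments along their shared vertex and invoking the calmness maximality clause. The only cosmetic differences are that the paper takes the slightly looser parameters $k=n$, $m=d\omega(D)$, and that instead of reducing to a minimal chord it directly uses modesty on the full offending path to obtain the edge $x_1\to x_3$ and then works with the triple $x_1,x_2,x_3$ exactly as you do with $u,v,x$.
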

	
	\begin{proof}
		By \Cref{highDegree}, there exists a partition $\bigcup_{i=1}^{d\omega(D)} C_i$ of $V(D)$ such that $\chi(D[C_{d\omega(D)}]) \ge 1$ and for every $2\le  j \le {d\omega(D)}$ and $v\in \bigcup_{i=j}^{d\omega(D)} C_i$, we have that $\left| N^{+}(v) \cap \left( \bigcup_{i=j-1}^{d\omega(D)} C_i \right) \right| \ge n$.
		For each $1\le i \le {d\omega(D)}$, let $X_i=\bigcup_{j=d\omega(D)-i+1}^{d\omega(D)} C_i$.
		Then, $(X_1, \ldots , X_{d\omega(D)})$ is a $(n,{d\omega(D)})$-grading of $D$.
		By \Cref{grading}, there is a calm copy $\overrightarrow{T'}$ of $\overrightarrow{T}$ in $(X_1, \ldots , X_{d\omega(D)})$. It remains to show that $\overrightarrow{T'}$ is path-induced in $\overrightarrow{D}$.
		
		Suppose for sake of contradiction that $\overrightarrow{T'}$ is not path-induced.
		Then there is a directed path $x_1x_2\cdots x_q$ of $\overrightarrow{T'}$ such that $x_1$ and $x_q$ are adjacent in $D$. Since $D$ is acyclic, we have that $x_1x_q\in E(\overrightarrow{D})$.
		As $\overrightarrow{D}$ is modest, it follows that $x_1x_3\in E(\overrightarrow{D})$.
		By the definition of a calm copy of $\overrightarrow{T}$ in $(X_1, \ldots , X_{d\omega(D)})$, there is a transitive tournaments $A_1$ starting at $x_1$ and ending at $x_2$ where $|A_1|=t(x_1,x_2) \ge g(x_2) - g(x_1) + 1$, and similarly, there is a transitive tournaments $A_2$ starting at $x_2$ and ending at $x_3$ where $|A_2|=t(x_2,x_3) \ge g(x_3) - g(x_2) + 1$.
		Clearly, $|A_1|,|A_2|\ge 2$.
		Since $\overrightarrow{D}$ is acyclic, we have that $V(A_1)\cap V(A_2) = \{x_2\}$.
		As $x_1x_3\in E(\overrightarrow{D})$ and $\overrightarrow{D}$ is modest, it follows that there is a transitive clique $A_3$ with vertex set $V(A_1)\cup V(A_2)$ between $x_1$ and $x_3$.
		We have that
		\[
		|A_3|=|A_1|+|A_2|-1
		\ge (g(x_2)-g(x_1) + 1)
		+ (g(x_3)-g(x_2) + 1)
		- 1
		=
		g(x_3)-g(x_1) + 1.
		\]
		But then, $t(x_1,x_3) - 1 \ge g(x_3) - g(x_1)$ and $t(x_1,x_3) > t(x_1,x_2)$, contracting the fact that $\overrightarrow{T'}$ is a calm copy of $\overrightarrow{T}$ in $(X_1, \ldots , X_{d\omega(D)})$.
		Hence $\overrightarrow{T'}$ is a path-induced copy of $\overrightarrow{T}$, as desired.
	\end{proof}

	\section{Box intersection graphs}\label{sec:boxes}
	
	In this section we shall prove \Cref{main}.
	In addition to \Cref{directedmain}, we shall require two more preliminary lemmas first.
	
	For a collection $\mathcal{B}$ of axis-aligned boxes in $\mathbb{R}^d$, its intersection graph is the graph with vertex set $\mathcal{B}$ where $B_1,B_2\in \mathcal{B}$ are adjacent if and only if they intersect.
	For $1\le i \le d$, we let $p_i:\mathbb{R}^d \to \mathbb{R}$ be the projection onto the $i$-th coordinated axis.
	
	For our proof we require the Erd\H{o}s-Hajnal property for box intersection graphs. We remark that some much more general classes of graphs, such as graphs of bounded VC-dimension are known to have the Erd\H{o}s-Hajnal property~\cite{nguyen2023induced}.
	For completeness, we give a simple inductive proof for box intersection graphs with explicit bounds.
	
	\begin{lemma}\label{boxEH}
		Let $\mathcal{B}$ be a collection of axis-aligned boxes in $\mathbb{R}^d$ and let $G$ be their intersection graph. Then $|V(G)|\le  \alpha(G)^d 
		\omega(G)$.
	\end{lemma}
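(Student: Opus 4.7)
The plan is to induct on $d$, with a nested induction on $\alpha(G)$ inside each inductive step. The geometric idea is a leftmost sweep in the first coordinate: pick the box $B^* \in \mathcal{B}$ whose projection $p_1(B^*) = [a^*, r^*]$ has the smallest right endpoint $r^*$, then split $\mathcal{B}$ into
\[
\mathcal{C} := \{B \in \mathcal{B} : r^* \in p_1(B)\} \qquad \text{and} \qquad \mathcal{B}' := \mathcal{B}\setminus\mathcal{C},
\]
the latter being the boxes whose first-coordinate projection lies strictly to the right of $r^*$.

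For $\mathcal{C}$: every $B \in \mathcal{C}$ meets the hyperplane $H := \{x \in \mathbb{R}^d : x_1 = r^*\}$, and the slice $B \cap H$ is naturally an axis-aligned box in $\mathbb{R}^{d-1}$. Since all these slices lie in the common hyperplane $H$, two boxes in $\mathcal{C}$ intersect in $\mathbb{R}^d$ if and only if their slices intersect in $\mathbb{R}^{d-1}$. The outer induction (with the convention that the $d=0$ case says a nonempty family of points has a single intersection graph, a single vertex) therefore gives $|\mathcal{C}| \le \alpha(G[\mathcal{C}])^{d-1}\omega(G[\mathcal{C}]) \le \alpha(G)^{d-1}\omega(G)$.

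For $\mathcal{B}'$: each $B \in \mathcal{B}'$ has $p_1(B)$ strictly to the right of $r^*$, hence $B$ is disjoint from $B^*$. Appending $B^*$ to any independent set of $G[\mathcal{B}']$ yields a strictly larger independent set of $G$, so $\alpha(G[\mathcal{B}']) \le \alpha(G)-1$. The inner induction on $\alpha$ then yields $|\mathcal{B}'| \le (\alpha(G)-1)^d \omega(G)$.

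Summing the two bounds gives $|V(G)| \le (\alpha^{d-1} + (\alpha-1)^d)\omega(G)$ where $\alpha := \alpha(G)$. The desired bound follows from the elementary inequality $\alpha^{d-1} + (\alpha-1)^d \le \alpha^d$, which after dividing by $\alpha^d$ becomes $(1-1/\alpha)^d \le 1-1/\alpha$ and holds for all $\alpha \ge 1$ and $d \ge 1$. The base cases (empty $\mathcal{B}$, or $\alpha(G) \le 1$ where $G$ is a clique) are immediate. I do not expect a real obstacle: the whole argument rests on the single geometric fact that the leftmost right endpoint produces a separating hyperplane, together with the one-line inequality at the end.
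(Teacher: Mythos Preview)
Your argument is correct. The sweep-line split into $\mathcal{C}$ (boxes stabbed by the hyperplane $x_1=r^*$) and $\mathcal{B}'$ (boxes strictly to the right) works exactly as you describe, and the closing inequality $\alpha^{d-1}+(\alpha-1)^d\le\alpha^d$ is valid for all integers $\alpha,d\ge1$. One cosmetic point: your description of the $d=0$ base case as ``a single vertex'' is off --- a family of points in $\mathbb{R}^0$ gives a \emph{clique}, not a single vertex --- but what you actually need there is just $|V|\le\omega$, which is exactly what a clique gives.

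The paper takes a genuinely different route. Rather than a double induction on $(d,\alpha)$, it does a single induction on $d$: project all boxes to the $d$-th coordinate to get an interval graph $H$, use perfection of interval graphs to write $|V(G)|=|V(H)|\le\alpha(H)\,\omega(H)$, then observe that a maximum clique $X$ of $H$ corresponds to boxes whose $d$-th projections share a common point, so $G[X]$ is realized by $(d-1)$-dimensional boxes and the inductive hypothesis bounds $|X|$. Your approach is more self-contained (it never invokes perfection of interval graphs --- in fact the $d=1$ instance of your argument reproves it), at the cost of the extra inner induction and the arithmetic inequality at the end. The paper's approach is shorter and avoids that inequality entirely, but relies on the interval-graph fact as a black box.
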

	
	\begin{proof}
		The lemma holds for $d=1$, since then $G$ is an interval graph and therefore $\omega(G)$-colourable.
		We proceed inductively with $d\ge 2$.
		Consider the intersection graph $H$ of $p_d(\mathcal{B})$.
		Then $|V(G)|=|V(H)| \le \alpha(H)\omega(H)$.
		Let $X$ be the vertex set of a clique in $H$ with size $\omega(H)$.
		Let $p^*: \mathbb{R}^d \to \mathbb{R}^{d-1}$ be the projection onto the subspace spanned by the first $d-1$ axes.
		Let $H'$ be the intersection graph of $p^*(\mathcal{B})$ restricted to the vertex set $X$.
		Since $X$ is a clique in $H$, it follows that a clique in $H'$ is a clique in $G$. So, $\omega(G)\ge \omega(H')$. We also clearly have that $\alpha(G) \ge \alpha(H)$ and $\alpha(G) \ge \alpha(H')$.
		Therefore,
		\[
		|V(G)|
		=
		|V(H)|
		\le
		\alpha(H)\omega(H)
		\le
		\alpha(G)|V(H')|
		\le
		\alpha(G)\alpha(H')^{d-1}\omega(H')
		\le
		\alpha(G)^d \omega(G),
		\]
		as desired.
	\end{proof}

	For an interval $I\subset \mathbb{R}$, we let $\ell(I)$ denote its leftmost point and $r(I)$ denote its rightmost point.
	Notice that for a box intersection graph $G(\mathcal{B})$, by possibly slightly perturbing the boxes of $\mathcal{B}$, we can assume that there are no two distinct boxes $B_1, B_2 \in \mathcal{B}$ such that $p_i(B_1), p_i(B_2)$ share an endpoint. For convenience, we shall assume this for the rest of the paper.

	For two intersecting intervals $I_1,I_2$, we say that
	\begin{itemize}
		\item $I_1$ \emph{contains} $I_2$ if $\ell(I_1) < \ell(I_2) < r(I_2) < r(I_1)$ (or equivalently $I_2\subset I_1$),
		\item $I_1$ is \emph{contained} in $I_2$ if $\ell(I_2) < \ell(I_1) < r(I_1) < r(I_2)$ (or equivalently $I_1\subset I_2$),
		\item $I_1$ \emph{left-intersects} $I_2$ if $\ell(I_1)<\ell(I_2) < r(I_1) < r(I_2)$, and 
		\item $I_1$ \emph{right-intersects} $I_2$ if $\ell(I_2)<\ell(I_1) < r(I_2) < r(I_1)$.
	\end{itemize}
	Notice that two intersecting intervals with disjoint endpoints intersect with one of these four types.
	If two axis-aligned boxes $B_1,B_2\subset \mathbb{R}^d$ intersect, then for each $1\le i \le d$, the intervals $p_i(B_1),p_i(B_2)$ intersect with one of these four types.
	
	For any pair of axis-aligned intersecting boxes $B_1,B_2\subset \mathbb{R}^d$ we associate a $d$-tuple which encodes the different types of intersections of the projections of the boxes onto each of the $d$ axes.
	We call this the \emph{intersection pattern} of $B_1$ with respect to $B_2$.
	We let $\mathcal{P}_d$ be the possible intersection patterns of axis-aligned boxes in $\mathbb{R}^d$.
	Note that $|\mathcal{P}_d|=4^d$.
	If $G$ is the intersection graph of a collection of axis-aligned boxes in $\mathbb{R}^d$, and $R\in \mathcal{P}_d$ is an intersection pattern, then we let $\overrightarrow{G_R}$ be the directed graph on the same vertex set where $uv\in E(\overrightarrow{G_R})$ if the corresponding box $B_u$ has the intersection pattern $R$ with respect to the corresponding box $B_v$.
	Note that $G_R$ is a subgraph of $G$.
	
	We require some graph theoretic properties of $\overrightarrow{G_R}$ in relation to $G$.
	
	\begin{lemma}\label{basic}
		Let $\mathcal{B}$ be a collection of axis-aligned boxes in $\mathbb{R}^d$ and let $G$ be their intersection graph.
		Let $R\in \mathcal{P}_d$ be an intersection pattern.
		Then $\overrightarrow{G_R}$ is acyclic, modest, and furthermore for every pair of directed paths $ux_1\cdots x_a$, $uy_1\cdots y_b$ in $\overrightarrow{G_R}$, if $x_1$ is non-adjacent to $y_1$ in $G$, then $x_a$ is non-adjacent to $y_a$ in $G$.
	\end{lemma}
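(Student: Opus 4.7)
The plan is to work coordinate by coordinate and case-analyse on the four possible intersection types. The key observation is that for two intersecting intervals with disjoint endpoints, each of the four types fixes the strict order of both the pair of left-endpoints and the pair of right-endpoints. So for every directed edge $uv$ of $\overrightarrow{G_R}$ and every $i \in \{1, \ldots, d\}$, the $i$-th entry of $R$ pins down two strict inequalities, one between $\ell(p_i(B_u))$ and $\ell(p_i(B_v))$ and one between $r(p_i(B_u))$ and $r(p_i(B_v))$. Composing these inequalities along directed walks gives strict monotonicity of both endpoints in every coordinate, from which acyclicity of $\overrightarrow{G_R}$ is immediate.

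For modesty, given an edge $uv$ and a directed $u$-to-$v$ path $u = z_0 \to z_1 \to \cdots \to z_{k+1} = v$, I will show that in each coordinate $i$ the projections $p_i(B_{z_j})$ share a common point, which yields a clique on $V(P)$. If $R_i$ is \emph{contains} or \emph{contained in}, then iterating the edge relation along the path produces a nested chain of intervals, whose intersection is nonempty. If $R_i$ is \emph{left-intersects}, I will verify that $r(p_i(B_u))$ belongs to every $p_i(B_{z_j})$: monotonicity of the $r$-endpoints along $u \to z_j$ gives $r(p_i(B_u)) < r(p_i(B_{z_j}))$, and monotonicity of the $\ell$-endpoints along $z_j \to v$ combined with $\ell(p_i(B_v)) < r(p_i(B_u))$ (supplied by the edge $uv$ itself) gives $\ell(p_i(B_{z_j})) < r(p_i(B_u))$. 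The \emph{right-intersects} case is symmetric via the point $\ell(p_i(B_u))$. This is the only place where the hypothesis ``$uv$ is itself an edge'' is used, as opposed to $u$ and $v$ being merely endpoints of a path.

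For the third statement I argue the contrapositive coordinate by coordinate: if $p_i(B_{x_a})$ meets $p_i(B_{y_b})$ in coordinate $i$, then so do $p_i(B_{x_1})$ and $p_i(B_{y_1})$. In the \emph{contained in}, \emph{left-intersects}, and \emph{right-intersects} subcases the conclusion holds unconditionally by a one-edge version of the modesty argument: $p_i(B_{x_1}) \cap p_i(B_{y_1})$ contains, respectively, the interval $p_i(B_u)$, the point $r(p_i(B_u))$, or the point $\ell(p_i(B_u))$. The remaining \emph{contains} subcase is direct: the two paths produce nested chains $p_i(B_{x_a}) \subset \cdots \subset p_i(B_{x_1})$ and $p_i(B_{y_b}) \subset \cdots \subset p_i(B_{y_1})$, so any point of $p_i(B_{x_a}) \cap p_i(B_{y_b})$ lies in $p_i(B_{x_1}) \cap p_i(B_{y_1})$.

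I do not expect a serious obstacle; the whole lemma reduces to interval arithmetic in a fixed coordinate, and the main care needed is to track, for each of the four types, the two signs prescribing how $\ell$- and $r$-endpoints move along an edge, and to remember to spend the hypothesis ``$uv$ is an edge'' (rather than just a path) in precisely the one subcase of modesty that needs it.
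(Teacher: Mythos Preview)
Your proposal is correct and follows essentially the same coordinate-by-coordinate approach as the paper: strict monotonicity of an endpoint for acyclicity, a common point in each coordinate for modesty, and the observation that only the \emph{contains} type can separate the two out-neighbours of $u$ (with nesting then propagating the separation) for the third property. Your contrapositive framing of the third part and your explicit remark on where the edge $uv$ is spent are minor presentational differences, not a different argument.
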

	
	\begin{proof}
		First we shall show that $\overrightarrow{G_R}$ is acyclic.
		The intersection pattern $R$ ensures that either for every edge $uv$ we have that $\ell(p_1(B_u)) < \ell(p_1(B_v))$, or for every edge $uv$ we have that $\ell(p_1(B_v)) < \ell(p_1(B_u))$.
		Without loss of generality, we assume the former.
		Consider an arbitrary directed path $u_1u_2\cdots u_k$ in $\overrightarrow{G_R}$.
		Then, $\ell(p_1(B_{u_1})) < \ell(p_1(B_{u_2})) < \cdots < \ell(p_1(B_{u_k}))$.
		As $\ell(p_1(B_{u_1})) <  \ell(p_1(B_{u_k}))$, it follows that there is no edge from $u_k$ to $u_1$ in $\overrightarrow{G_R}$.
		Therefore, $\overrightarrow{G_R}$ is acyclic.
		
		Next, we show that $\overrightarrow{G_R}$ is modest.
		Consider an arbitrary directed path $u_1u_2\cdots u_k$ in $\overrightarrow{G_R}$ with $u_1u_k\in E(\overrightarrow{G_R})$.
		To show that $C=\{u_1, \ldots , u_k\}$ is a clique, it is enough to show that for each $1\le j \le d$, there is a point $s_j\in \mathbb{R}$ contained in each of $p_j(B_{u_1}) , \ldots , p_j(B_{u_k})$, since then $B_{u_1}, \ldots , B_{u_k}$ have a common intersection point. So, consider some $1\le j \le d$ and the intersection type $R_j$ of $R$ in the $j$-coordinate. If $R_j$ is either the contains or right-intersects intersection types, then we can take $s_j = \ell(p_j(B_{u_k}))$. Otherwise, if $R_j$ is either the containment or left-intersects intersection types, then we can take $s_j = r(p_j(B_{u_1}))$.
		Therefore, $C$ is a clique.
		Hence, $\overrightarrow{G_R}$ is modest.
		
		Now, for the last property.
		Consider two directed paths $ux_1\cdots x_a$, $uy_1\cdots y_b$ in $\overrightarrow{G_R}$ with $x_1$ non-adjacent to $y_1$ in $G$.
		Since $x_1$ and $y_1$ are non-adjacent in $G$ and both have the same intersection pattern with $u$, it follows that for some $1\le j \le d$, $R_j$ is the contains relation and that $p_j(B_{x_1}), p_j(B_{y_1})$ are disjoint.
		Then, we further have that $p_j(B_{x_a}) \subset p_j(B_{x_{a-1}}) \subset \cdots \subset  p_j(B_{x_1})$ and $p_j(B_{y_b}) \subset p_j(B_{y_{b-1}}) \subset \cdots \subset p_j(B_{y_1})$.
		In particular, $p_j(B_{x_a})$ and $p_j(B_{y_b})$ are disjoint, and therefore $x_a$ and $y_b$ are non-adjacent in $G$.
	\end{proof}

	For positive integers $r,k$, we let $\overrightarrow{T_{r,k}}$ be the rooted directed tree such that the paths between the root and any leaf are of length $r$, and each non-leaf vertex $v$ has exactly $k$ out-neighbours.
	
	We are now ready to prove \Cref{main}, which we restate with explicit bounds.
	
	\begin{theorem}
		Let $G$ be an intersection graph of axis-aligned boxes in $\mathbb{R}^d$ with no induced $T_{r,k}$ subgraph.
		Then $\chi(G) \le (2rk^d\omega(G)^2)^{4^{d}}$.
	\end{theorem}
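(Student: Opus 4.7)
The plan is to argue by contradiction, combining the three tools above. I assume $\chi(G)>(2rk^d\omega(G)^2)^{4^d}$ and will exhibit an induced copy of $T_{r,k}$ in $G$. First I would observe that the edges of $G$ partition among the $4^d$ patterns $R\in\mathcal{P}_d$, and that concatenating a proper colouring of each $G_R$ yields a proper colouring of $G$; hence $\chi(G)\le\prod_R\chi(G_R)$, and so some pattern $R$ satisfies $\chi(G_R)>2rk^d\omega(G)^2$. By \Cref{basic} the directed graph $\overrightarrow{G_R}$ is then acyclic, modest, and has $\omega(G_R)\le\omega(G)$, which puts me in the setting of \Cref{directedmain}.

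Next I would engineer a tree to feed to \Cref{directedmain}. I set $K=k^d\omega(G)$; this is the smallest branching width letting \Cref{boxEH} extract $k$ pairwise non-adjacent children from any $K$ boxes whose induced subgraph has clique number at most $\omega(G)$. Applying \Cref{directedmain} to $\overrightarrow{G_R}$ with the complete $K$-ary rooted directed tree $\overrightarrow{T_{r,K}}$ would produce a path-induced copy $\overrightarrow{T'}$ of $\overrightarrow{T_{r,K}}$ in $\overrightarrow{G_R}$, provided $\chi(G_R)$ exceeds the threshold $2r|T_{r,K}|\omega(G_R)$. From $\overrightarrow{T'}$ I would build an induced copy $\overrightarrow{T''}$ of $T_{r,k}$ top-down: at each internal vertex of the current tree, its $K$ children form an induced box subgraph of $G$ of clique number at most $\omega(G)$, so \Cref{boxEH} lets me retain $k$ pairwise non-adjacent children and discard the rest together with their subtrees, before recursing into the retained subtrees.

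To check that $T''$ is induced in $G$, I would pick any two distinct vertices $x,y\in V(T'')$. If one is a descendant of the other they lie on a common directed path of $T'$, which is induced in $G$ by path-inducedness, so they are adjacent only when they are parent-child in $T''$. Otherwise their least common ancestor $u$ in $T''$ has them in distinct chosen subtrees rooted at siblings $v_x\ne v_y$, selected non-adjacent in $G$; the third property of \Cref{basic}, applied to the two $T'$-paths $u\to v_x\to\cdots\to x$ and $u\to v_y\to\cdots\to y$, then forces $x\not\sim y$ in $G$. Thus $T''$ would be an induced $T_{r,k}$, contradicting the hypothesis.

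The main obstacle is quantitative. A one-shot application of \Cref{directedmain} to the tree $\overrightarrow{T_{r,K}}$, whose size is of order $K^r=(k^d\omega(G))^r$, only produces a chromatic bound of order $\bigl(r(k^d\omega(G))^r\omega(G)\bigr)^{4^d}$ rather than the sharper $(2rk^d\omega(G)^2)^{4^d}$ claimed. To hit the stated exponents the tree-finding lemma presumably has to be applied iteratively, building one level of the induced $T_{r,k}$ at a time, so that each level contributes just a single factor of $\omega(G)$ from Erd\H{o}s--Hajnal and one factor of $\omega(G)$ from \Cref{directedmain}; this level-by-level bookkeeping, rather than the one-shot extraction sketched above, is where the technical heart of the proof lies.
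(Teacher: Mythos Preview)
Your approach is exactly the paper's: assume $\chi(G)$ is too large, pass to some $G_R$ with large chromatic number, apply \Cref{basic} and then \Cref{directedmain} to find a path-induced copy of $\overrightarrow{T_{r,K}}$ with $K=k^d\omega(G)$, thin each set of $K$ children down to $k$ pairwise $G$-non-adjacent ones using \Cref{boxEH}, and verify that the resulting $T_{r,k}$ is induced in $G$ via path-inducedness on ancestor--descendant pairs and the third clause of \Cref{basic} on incomparable pairs. Every step you outline appears in the paper's proof, in the same order and with the same justifications.

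Your quantitative worry is well taken, and in fact the paper's proof has exactly the same issue. The paper, like you, applies \Cref{directedmain} in one shot to the tree $\overrightarrow{T_{r,k^d\omega(G)}}$; it does \emph{not} iterate level by level. Since \Cref{directedmain} requires $\chi>2\,(\text{depth})\cdot n\cdot\omega$ with $n=|V(T_{r,K})|=\Theta(K^r)$, the argument as written yields a bound of the form $\bigl(2r\,|V(T_{r,K})|\,\omega(G)\bigr)^{4^d}$ rather than $(2rK\omega(G))^{4^d}=(2rk^d\omega(G)^2)^{4^d}$. This is still polynomial in $\omega(G)$ for fixed $d,r,k$, so \Cref{main} (polynomial $\chi$-boundedness) is unaffected; only the displayed explicit constant is looser than advertised. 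So you should not look for a hidden level-by-level mechanism: your one-shot argument \emph{is} the paper's argument, and the discrepancy lies in the stated constant, not in a missing idea.
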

	
	\begin{proof}
		Suppose for sake of contradiction that $\chi(G) > (2rk^d\omega(G)^2)^{4^{d}}$.
		Observe that $G$ is the union of the subgraphs $G_R$, where $R$ is taken across all $4^{d}$ intersection patterns in $\mathcal{P}_d$.
		By considering a product colouring, we see that there is an intersection pattern $R$ such that $\chi(G) > 2rk^d\omega(G)^2$.
		Note that $\overrightarrow{G_R}$ is acyclic and modest by \Cref{basic}.
		
		By \Cref{directedmain}, $\overrightarrow{G_R}$ contains a path-induced $\overrightarrow{T_{r,k^d \omega(G)}}$.
		Observe that $T_{r,k^d \omega(G)}$ must furthermore be path-induced in $G$.
		For every non-leaf vertex $v$ of $T_{r,k^d \omega(G)}$, by \Cref{boxEH} there exists a set $S_v\subseteq N^+_{\overrightarrow{T_{r,k^d \omega(G)}}}(v)$ that is independent in $G$ and with $|S_v|=k$.
		Let $L_0$ be the vertex set consisting simply of the root of $\overrightarrow{T_{r,k^d \omega(G)}}$.
		For each $1\le i \le r$ in order, let $L_i=\bigcup_{v\in L_{i-1}} S_v$.
		Then restricting $\overrightarrow{T_{r,k^d \omega(G)}}$ to the vertex set $\bigcup_{i=0}^r L_i$ provides an induced $T_{r,k}$ in $G$ by \Cref{basic}.
		
		Hence, $\chi(G) \le (2rk^d\omega(G)^2)^{4^{d}}$, as desired.
	\end{proof}

	\section{Open problems}\label{sec:open}
	
	In this section we discuss some open problems on proving the Gyárfás-Sumner conjecture for other classes of graphs that also include graphs with large girth and chromatic number.
	In this paper we have proved the Gyárfás-Sumner conjecture for box intersection graphs in $\mathbb{R}^d$, and
	as previously mentioned, R\"{o}dl (see~\cite{gyarfas1980induced,kierstead1996applications}) proved the Gyárfás-Sumner conjecture for graphs without an induced $K_{s,s}$.
	So, we shall only consider classes of graphs that contain large bicliques.
	
	As shown by the first author~\cite{davies2021box}, another natural class of geometric intersection graphs that contain graphs with large girth and chromatic number are the intersection graphs of lines in $\mathbb{R}^3$.
	We remark that in $\mathbb{R}$, line intersection graphs are simply cliques, and in $\mathbb{R}^2$, line intersection graphs are simply complete multipartite graphs.
	
	\begin{problem}
		Does the Gyárfás-Sumner conjecture hold for intersection graphs of lines in $\mathbb{R}^d$?
		What about intersection graphs of segments in $\mathbb{R}^d$?
	\end{problem}
	
	For a collection of $d$-spheres $\mathcal{S}$ in $\mathbb{R}^{d+1}$, we say that the graph with vertices $\mathcal{S}$ and edges $S_1S_2$ when $S_1$ and $S_2$ are orthogonal $d$-spheres is the \emph{orthogonality graph} of $\mathcal{S}$.
	Recently, the first author, Keller, Kleist, Smorodinsky, and Walczak~\cite{davies2021solution} showed that there are orthogonality graphs of circles in $\mathbb{R}^2$ with large girth and chromatic number.
	
	\begin{problem}
		Does the Gyárfás-Sumner conjecture hold for orthogonality graphs of $d$-spheres?
	\end{problem}
	
	Bollob{\'a}s~\cite{bollobas1977colouring} showed that there are Hasse diagrams with large girth and chromatic number (see also~\cite{suk2021hasse} for another construction).
	A natural geometric class of graphs that contains Hasse diagrams are the disjointness graphs of strings in the plane~\cite{middendorf1993weakly} (or equivalently, the complements of string graphs).
	
	\begin{problem}
		Does the Gyárfás-Sumner conjecture hold for Hasse diagrams?
		What about disjointness graphs of strings in the plane?
	\end{problem}

	While not every hereditary $\chi$-bounded class of graphs is polynomially $\chi$-bounded~\cite{brianski2024separating}, it is true that some (not necessarily $\chi$-bounded) hereditary classes of graphs have the property that every hereditary $\chi$-bounded subclass of graphs is polynomially $\chi$-bounded.
	Such classes are called \emph{Pollyanna} and have recently been studied by Chudnovsky, Cook, Davies, and Oum~\cite{chudnovsky2023reuniting}.
	In \Cref{main}, we obtained polynomial bounds for the $\chi$-bounding functions.
	Perhaps much more generally, box intersection graphs could be Pollyanna.
	Note that interval graphs are perfect and rectangle intersection graphs are polynomially $\chi$-bounded~\cite{asplund1960coloring,chalermsook2021coloring}, so this is certainly true in at most $2$ dimensions.
	
	\begin{conjecture}
		Intersection graphs of axis-aligned boxes in $\mathbb{R}^d$ are Pollyanna.
	\end{conjecture}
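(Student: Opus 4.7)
The plan is to derive a contradiction from $\chi(G) > (2rk^d\omega(G)^2)^{4^d}$ by combining three tools: a product-coloring reduction to a single intersection pattern, \Cref{directedmain} to extract a large path-induced rooted tree in that pattern's directed graph, and \Cref{boxEH} to sparsify this tree into a genuinely induced $T_{r,k}$.

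First I would partition the edges of $G$ according to the $4^d$ intersection patterns $R\in\mathcal{P}_d$; since $G$ is the union of the subgraphs $G_R$, a standard product-coloring argument (color each $G_R$ separately and assign each vertex the tuple of its colors) gives $\chi(G) \le \prod_R \chi(G_R)$. Under the contradiction hypothesis, some $R$ therefore satisfies $\chi(G_R) > 2rk^d\omega(G)^2$. By \Cref{basic}, $\overrightarrow{G_R}$ is acyclic and modest, so \Cref{directedmain} applies and produces a path-induced copy of $\overrightarrow{T_{r,k^d\omega(G)}}$ in $\overrightarrow{G_R}$. Since every directed path of $\overrightarrow{G_R}$ is an undirected path of $G$, this copy is also path-induced in $G$, which means no chord appears along any root-to-leaf path.

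The remaining task is to thin this wide path-induced tree into an induced $T_{r,k}$. For each non-leaf vertex $v$ of the path-induced $T_{r,k^d\omega(G)}$, the induced subgraph of $G$ on its $k^d\omega(G)$ out-neighbours has clique number at most $\omega(G)$, so \Cref{boxEH} yields an independent set $S_v$ of size at least $k$ among the children of $v$. I would then build the desired tree level by level: take $L_0$ to be the root, and inductively set $L_i = \bigcup_{v\in L_{i-1}} S_v$ for $1\le i \le r$. Vertices sharing a parent are non-adjacent by the choice of $S_v$, and the third clause of \Cref{basic} guarantees that non-adjacency between two siblings at one level propagates down to any pair of their descendants, so cousins at every deeper level remain non-adjacent as well.

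The main obstacle in this plan is precisely the last step: path-inducedness by itself only controls chords along single root-to-leaf paths, so a priori there could be chords between vertices lying in disjoint subtrees, and these would destroy the induced structure. The propagation property in \Cref{basic} is exactly the ingredient needed to rule this out, because once we force non-adjacency between the chosen siblings at each node via \Cref{boxEH}, this non-adjacency cascades down every pair of sibling branches. Combining this cross-branch control with the path-induced control along each branch yields a fully induced $T_{r,k}$ in $G$, contradicting the hypothesis and giving $\chi(G) \le (2rk^d\omega(G)^2)^{4^d}$.
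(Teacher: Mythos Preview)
Your argument does not address the stated conjecture at all. The conjecture asserts that intersection graphs of axis-aligned boxes in $\mathbb{R}^d$ are \emph{Pollyanna}, meaning that \emph{every} hereditary $\chi$-bounded subclass of box intersection graphs is polynomially $\chi$-bounded. What you have written is (essentially verbatim) the paper's proof of the quantitative main theorem: that box intersection graphs with no induced $T_{r,k}$ satisfy $\chi(G)\le (2rk^d\omega(G)^2)^{4^d}$. That theorem treats only the specific hereditary subclasses obtained by forbidding an induced forest; the Pollyanna property requires handling an arbitrary hereditary $\chi$-bounded subclass, which need not be defined by a forbidden forest and need not be amenable to the tree-embedding machinery you invoke.

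In the paper this statement is explicitly posed as an open conjecture in the final section, with no proof given. So your proposal is a correct reproduction of the proof of a different result, but as a proof of the conjecture it has a fundamental gap: you never engage with what ``Pollyanna'' means, and nothing in your argument shows that an arbitrary $\chi$-bounded hereditary subclass of box graphs admits a polynomial $\chi$-bounding function.
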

	
	\section*{Acknowledgements}
	
	This work was completed at the 11th Annual Workshop on Geometry and Graphs held at Bellairs Research Institute in March 2024.
	We are grateful to the organisers and participants for providing a stimulating research environment. Yelena Yuditsky was supported by FNRS as a Postdoctoral Researcher.

	\bibliographystyle{amsplain}

\end{document}